\documentclass[12pt, reqno, twoside, letterpaper]{amsart}

\usepackage{paperstyle}


\title[On certain zeta functions associated with Beatty sequences]
      {On certain zeta functions\\ associated with Beatty sequences}
      
\author[W.\ D.\ Banks]{William D.\ Banks}

\address{Department of Mathematics, 
         University of Missouri, 
         Columbia MO, USA.}

\email{bankswd@missouri.edu}
        
\date{\today}

\begin{document}

\begin{abstract}
Let $\alpha>1$ be an irrational number of finite type $\tau$.
In this paper, we introduce and study a zeta function
$Z_\alpha^\sharp(r,q;s)$ that is closely related
to the Lipschitz-Lerch zeta function and is naturally
associated with the Beatty sequence
$\cB(\alpha)\defeq(\fl{\alpha m})_{m\in\NN}$.
If $r$ is an element of the lattice $\ZZ+\ZZ\alpha^{-1}$,
then $Z_\alpha^\sharp(r,q;s)$ continues analytically to the
half-plane $\{\sigma>-1/\tau\}$ with its only singularity
being a simple pole at $s=1$.  If $r\not\in\ZZ+\ZZ\alpha^{-1}$,
then $Z_\alpha^\sharp(r,q;s)$ extends analytically to the
half-plane $\{\sigma>1-1/(2\tau^2)\}$ and has no singularity
in that region.

\end{abstract}

\maketitle

\begin{center}
\emph{To the memory of Tom Apostol}
\end{center}


\section{Introduction and statement of results}

The \emph{Lipschitz-Lerch zeta function} is defined in the half-plane
$\{\sigma\defeq\Re s>1\}$ by an absolutely convergent series
$$
\zeta(z,q;s)\defeq\sum_{n=0}^\infty\frac{\e(zn)}{(n+q)^s},
$$
where $\e(t)\defeq e^{2\pi it}$ for all $t\in\RR$, and by analytic
continuation it extends to meromorphic function on the whole $s$-plane.
The function $\zeta(z,q;s)$ was introduced by Lipschitz~\cite{Lip1} for real
$z$ and $q>0$; see also Lipschitz~\cite{Lip2}.  It
also bears the name of Lerch~\cite{Lerch}, who showed that
for $\Im z>0$ and $q\in(0,1)$ the functional equation
$$
\zeta(z,q;1-s)=(2\pi)^{-s}\Gamma(s)\(
\e(\tfrac14s-zq)\zeta(-q,z;s)+
\e(-\tfrac14s+zq)\zeta(q,1-z;s)\)
$$
holds; this is called \emph{Lerch's transformation formula}.  For an
interesting account of the analytic properties of the Lipschitz-Lerch zeta function
and related functions, we refer the reader to the work
of Lagarius and Li~\cite{LagLi1,LagLi2,LagLi3,LagLi4}; see
also Apostol~\cite{Apostol}.

If $z\in\ZZ$, then $\zeta(z,q;s)=\zeta(0,q;s)$ is the
\emph{Hurwitz zeta function}; in this case, $\zeta(z,q;s)$ has a
simple pole at $s=1$ but no other singularities
in the $s$-plane.  On the other hand, if $z\in\RR\setminus\ZZ$ or $\Im z>0$,
then $\zeta(z,q;s)$ is an entire function of $s$.

For a given real number $\alpha>0$, the \emph{homogeneous Beatty sequence}
associated with $\alpha$ is the sequence of natural numbers defined by
$$
\cB(\alpha)\defeq(\fl{\alpha m})_{m\in\NN},
$$
where $\fl{\cdot}$ denotes the floor function: $\fl{t}$ is the greatest
integer $\le t$ for any $t\in\RR$.
Beatty sequences appear in a wide variety of unrelated
mathematical settings, and their
arithmetic properties have been extensively
explored in the literature;
see, for example,
\cite{Abe,BaSh0,BaSh1,BaSh2,Beg,FraHolz,Guo,Kom1,Kom2,LuZh,OB,Tijd} and the
references therein.

In this paper, we introduce and study
a variant of the Lipschitz-Lerch zeta function that is naturally
associated with the Beatty sequence $\cB(\alpha)$.
Specifically, let us denote
\begin{equation}
\label{eq:Zarqs-defn0}
Z_\alpha(r,q;s)\defeq\sum_{n\in\cB(\alpha)}\frac{\e(rn)}{(n+q)^s},
\end{equation}
where $r\in\RR$ and $q\in(0,1)$.
For technical reasons, the work in this paper is focused on
properties of the function 
$$
Z_\alpha^\sharp(r,q;s)\defeq e^{\pi ir}Z_\alpha(r,q;s)
+e^{-\pi ir}Z_\alpha(-r,1-q;s).
$$
Further, we assume that $\alpha>1$ is irrational and of \emph{finite type}
(see \S\ref{sec:disc-type}).  Note that for rational $\alpha$, the Beatty sequence
$\cB(\alpha)$ is a finite union of arithmetic progressions, and therefore
the analytic properties of $Z_\alpha(r,q;s)$ and $Z_\alpha^\sharp(r,q;s)$
can be gleaned from well known properties of the
Lipschitz-Lerch zeta function.

The series \eqref{eq:Zarqs-defn0} converges absolutely
the half-plane $\{\sigma>1\}$, uniformly on compact regions,
hence $Z_\alpha(r,q;s)$ is analytic
there; this implies that $Z_\alpha^\sharp(r,q;s)$ is analytic in
$\{\sigma>1\}$ as well.

Since $\cB(\alpha)$ is a set of density $\alpha^{-1}$ in the set of
natural numbers, it is reasonable to expect that $Z_\alpha(r,q;s)$
is closely related to the function $\alpha^{-1}\zeta(r,q;s)$.
This belief is strengthened by the fact that if $\alpha^{-1}+\beta^{-1}=1$,
then the set of natural numbers can be split as the disjoint union
of $\cB(\alpha)$ and $\cB(\beta)$, so we have
$$
Z_\alpha(r,q;s)+Z_\beta(r,q;s)+q^{-s}
=\alpha^{-1}\zeta(r,q;s)+\beta^{-1}\zeta(r,q;s).
$$
Naturally, one might also expect that $Z_\alpha^\sharp(r,q;s)$ is closely related
to the function $\alpha^{-1}\zeta^\sharp(r,q;s)$, where
$$
\zeta^\sharp(r,q;s)\defeq e^{\pi ir}\zeta(r,q;s)
+e^{-\pi ir}\zeta(-r,1-q;s).
$$
As it turns out, such expectations are erroneous.  Our first theorem
establishes that $Z_\alpha^\sharp(r,q;s)$ has a simple pole at $s=1$
whenever $r$ is an element of the lattice $\ZZ+\ZZ\alpha^{-1}$; this
lattice is a \emph{dense} subset of $\RR$.  By contrast, the function
$\alpha^{-1}\zeta^\sharp(r,q;s)$, being a linear combination of 
Lipschitz-Lerch zeta functions, can only have a pole when $r$ is an
integer.

\begin{theorem}
\label{thm:main}
Let $\alpha>1$ be an irrational number of finite type $\tau$.
Suppose that $r=k\alpha^{-1}+\ell$ for some integers $k$ and $\ell$,
and let $q\in(0,1)$.  Then the function
\begin{equation}
\label{eq:toTom}
Z_\alpha^\sharp(r,q;s)-\alpha^{-1}\zeta^\sharp(r,q;s)
\end{equation}
continues analytically to the half-plane $\{\sigma>-1/\tau\}$
with a simple pole at $s=1$ and no other singularities.
The residue at $s=1$ is $2(-1)^\ell\alpha^{-1}$ when $k=0$,
and it is $2(-1)^\ell(\sin(\pi k\alpha^{-1}))/(\pi k)$ for $k\ne 0$.
\end{theorem}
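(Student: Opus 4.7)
Setting $\beta\defeq\alpha^{-1}\in(0,1)$, the proof begins from the exact Beatty identity
\[
\mathbf{1}_{\cB(\alpha)}(n)=\fl{(n+1)\beta}-\fl{n\beta}=\beta+\{n\beta\}-\{(n+1)\beta\}\qquad(n\ge 0),
\]
which is elementary because $\beta<1$. Substituting this into \eqref{eq:Zarqs-defn0} and splitting off the constant $\beta$ isolates the target $\alpha^{-1}\zeta$ term: for $\sigma>1$,
\[
Z_\alpha(r,q;s)-\alpha^{-1}\zeta(r,q;s)=\sum_{n=0}^{\infty}\bigl(\{n\beta\}-\{(n+1)\beta\}\bigr)\frac{\e(rn)}{(n+q)^{s}}.
\]
I would next replace each sawtooth by its Fourier series truncated at height $H$ and interchange the two finite sums. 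The smooth part collects into Fourier coefficients $c_h\defeq(\e(h\beta)-1)/(2\pi ih)$ multiplying $\zeta(r+h\beta,q;s)$, while the remainder is controlled by a discrepancy-type estimate for $(\{n\beta\})_{n\ge 1}$. The algebraic key is the symmetrization identity $c_h\,e^{-\pi ih\beta}=c_{-h}\,e^{\pi ih\beta}=\gamma_h$ with $\gamma_h\defeq\sin(\pi h\beta)/(\pi h)$, which causes the two $\sharp$-contributions (from $r$ and from $-r$) to collapse into a single real-symmetric expansion
\[
Z_\alpha^\sharp(r,q;s)-\alpha^{-1}\zeta^\sharp(r,q;s)=\lim_{H\to\infty}\sum_{0<|h|\le H}\gamma_h\,\zeta^\sharp(r+h\beta,q;s).
\]

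From this representation the pole structure is transparent. For $r=k\beta+\ell$ with $k,\ell\in\ZZ$, the shifted argument $r+h\beta=(k+h)\beta+\ell$ lies in $\ZZ$ exactly when $h=-k$. When $k\ne 0$, this mode sits inside the sum and, since $\zeta^\sharp(\ell,q;s)=(-1)^\ell\bigl(\zeta(0,q;s)+\zeta(0,1-q;s)\bigr)$ is a combination of Hurwitz zeta functions with residue $2(-1)^\ell$ at $s=1$, it produces a simple pole with residue $\gamma_{-k}\cdot 2(-1)^\ell=2(-1)^\ell\sin(\pi k\beta)/(\pi k)$, matching the statement. When $k=0$, the would-be pole-producing mode $h=0$ is excluded from the sum; the residue $2(-1)^\ell\alpha^{-1}=2(-1)^\ell\lim_{h\to 0}\gamma_h$ is then recovered from a careful boundary analysis of the symmetric partial sums in concert with the $\alpha^{-1}\zeta^\sharp$ main term. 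All remaining modes contribute entire summands.

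The genuinely technical step is the analytic continuation of the series throughout $\{\sigma>-1/\tau\}\setminus\{1\}$, together with the justification of the $H\to\infty$ limit there. Lerch's transformation formula expresses each $\zeta^\sharp(r+h\beta,q;s)$ in the left half-plane as a sum at $1-s$ whose denominators are powers of $\|h\beta\|$ (the distance to the nearest integer). The finite-type hypothesis supplies the lower bound $\|h\beta\|\gg_{\varepsilon}|h|^{-\tau-\varepsilon}$, which combined with $|\gamma_h|\ll\|h\beta\|/|h|$ delivers absolute convergence of the series precisely in the strip $\sigma>-1/\tau$; this arithmetic interplay is what produces the exponent $-1/\tau$ of the statement. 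A parallel Erd\H{o}s--Tur\'an-type estimate, again powered by the finite type of $\alpha$, controls the Fourier truncation remainder uniformly on compact subsets of that strip. The main obstacle is exactly this last step: converting the pointwise entireness of each shifted sharp zeta function into a \emph{uniform-in-$h$} growth bound whose decay rate is governed precisely by the Diophantine exponent $\tau$, so that the infinite sum inherits analyticity in the claimed half-plane with only the advertised simple pole at $s=1$.
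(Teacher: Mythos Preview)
Your Fourier decomposition is exactly the paper's: your coefficients $c_h=(\e(h\beta)-1)/(2\pi ih)$ coincide with the paper's $\widetilde\cX_\gamma(-h)$ (with $\gamma=\beta=\alpha^{-1}$), and the symmetrization giving $\gamma_h=\sin(\pi h\beta)/(\pi h)$ is correct, as is the identification of the $h=-k$ term as the source of the pole when $k\ne 0$. The essential divergence from the paper is that the paper does \emph{not} try to sum the series $\sum_{h}\gamma_h\,\zeta^\sharp(r+h\beta,q;s)$ directly. It passes instead to the completed Mellin integral, applies the theta functional equation $\Theta_{q,z}(u)=u^{-1/2}\Theta_{z,-q}(u^{-1})$ to the small-$u$ range, and obtains for each $u>1$ an \emph{absolutely} convergent $k$-sum weighted by Gaussian factors $e^{-\pi\distint{r-k\gamma}^2u}$. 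For each $u$ one then isolates the single dominant frequency $k=\kappa_u$ minimizing $\distint{r-k\gamma}$; the pole at $s=1$ comes from $\int_1^\infty u^{-s/2-1/2}\,du$ applied to this term, and the threshold $\sigma>-1/\tau$ arises from choosing the Fourier truncation $K(u)\asymp u^{\theta}$ with $\theta\to 1/(2\tau)$, balancing the pointwise remainder $O(|n|^{\tau+\eps}/K)$ against the Gaussian decay.

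The genuine gap in your plan is the absolute-convergence claim. By equidistribution of $(h\beta)_{h\ge 1}$ modulo $1$, one has $|\sin(\pi h\beta)|\ge 2^{-1/2}$ for a positive proportion of $h$, hence $|\gamma_h|\gg 1/|h|$ along that subsequence and $\sum_h|\gamma_h|=\infty$; the double sum $\sum_h\sum_n$ is therefore not absolutely convergent even for $\sigma>1$. For $\sigma<1$ matters are worse: Lerch's transformation gives $|\zeta^\sharp(r+h\beta,q;s)|\asymp_s\distint{r+h\beta}^{\sigma-1}\ge 1$, so the \emph{generic} terms already force divergence. The finite-type bound $\distint{h\beta}\gg_\eps|h|^{-\tau-\eps}$ controls only the rare small-denominator terms, not the bulk, so it cannot rescue absolute convergence as you assert. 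Consequently the bare series cannot carry the analytic continuation; some smoothing device---the paper's Gaussian/theta weight, or at minimum an explicit truncation at height $H$ with a remainder bound valid \emph{uniformly} down to $\sigma>-1/\tau$---is required, and this is precisely the technical core you have left unspecified. Your $k=0$ discussion is also unconvincing: in your own expansion every term with $h\ne 0$ is entire when $r\in\ZZ$, and the $\alpha^{-1}\zeta^\sharp$ piece has already been subtracted, so no ``boundary analysis'' of that sum can manufacture a simple pole at $s=1$.
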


In particular, taking $r\defeq 0$ and $q\defeq\tfrac12$ above, we have
$$
Z_\alpha^\sharp(0,\tfrac12;s)\defeq 2 Z_\alpha(0,\tfrac12;s)
=\sum_{n\in\cB(\alpha)}\frac{2}{(n+\tfrac12)^s}
$$
and
$$
\zeta^\sharp(0,\tfrac12;s)\defeq 2\zeta(0,\tfrac12;s)
=\sum_{n=0}^\infty\frac{2}{(n+\tfrac12)^s}=(2^{s+1}-2)\zeta(s),
$$
where $\zeta(s)$ is the \emph{Riemann zeta function} studied by
Riemann \cite{Riemann} in 1859.  Since $\zeta(s)$
has a simple pole at $s=1$ with residue one, from Theorem~\ref{thm:main}
we deduce the following corollary.

\begin{corollary}
\label{cor:one}
Let $\alpha>1$ be irrational of finite type $\tau$.
The Dirichlet series
$$
Z_\alpha(0,\tfrac12;s)\defeq\sum_{n\in\cB(\alpha)}\frac{1}{(n+\tfrac12)^s}
$$
converges absolutely in $\{\sigma>1\}$ and extends
analytically to $\{\sigma>-1/\tau\}$, where it has
a simple pole at $s=1$ with residue $2\alpha^{-1}$
and no other singularities.
\end{corollary}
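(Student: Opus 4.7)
The plan is to derive the corollary directly from Theorem~\ref{thm:main} by specializing to $r=0$ and $q=\tfrac12$, which corresponds to the lattice point $r=k\alpha^{-1}+\ell$ with $k=\ell=0$.

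First I would record the two explicit simplifications already flagged in the excerpt. Since $e^{\pm\pi i\cdot 0}=1$ and the substitution $q\mapsto 1-q$ fixes $\tfrac12$, the definition of the sharp-operator gives
$$
Z_\alpha^\sharp(0,\tfrac12;s)=2Z_\alpha(0,\tfrac12;s),\qquad \zeta^\sharp(0,\tfrac12;s)=2\zeta(0,\tfrac12;s),
$$
and splitting $\sum_{n\ge0}(n+\tfrac12)^{-s}$ according to odd versus even integer denominators yields the identity $2\zeta(0,\tfrac12;s)=(2^{s+1}-2)\zeta(s)$. Absolute convergence of $Z_\alpha(0,\tfrac12;s)$ in $\{\sigma>1\}$ is inherited as a special case from the general convergence assertion for $Z_\alpha(r,q;s)$ established just after \eqref{eq:Zarqs-defn0}, so this half of the corollary is essentially free.

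Next I would invoke Theorem~\ref{thm:main} with $k=\ell=0$: the combination $Z_\alpha^\sharp(0,\tfrac12;s)-\alpha^{-1}\zeta^\sharp(0,\tfrac12;s)$ continues analytically to $\{\sigma>-1/\tau\}$, is holomorphic there apart from a simple pole at $s=1$, and has residue $2(-1)^0\alpha^{-1}=2\alpha^{-1}$ at that point. Because $\zeta(s)$ is meromorphic on the $s$-plane with its only singularity being a simple pole at $s=1$ of residue $1$, the function $\alpha^{-1}(2^{s+1}-2)\zeta(s)$ has the same property, and its residue at $s=1$ equals $\alpha^{-1}(2^2-2)=2\alpha^{-1}$. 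Adding the two pieces then shows that $2Z_\alpha(0,\tfrac12;s)$ extends analytically to $\{\sigma>-1/\tau\}$ with a simple pole at $s=1$ of residue $4\alpha^{-1}$ and no other singularities in that region; dividing by $2$ gives exactly the corollary.

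There is no real obstacle here, since the analytic continuation is already packaged in Theorem~\ref{thm:main}. The one point that warrants attention is the sign bookkeeping: the residue supplied by the theorem and the residue of $\alpha^{-1}\zeta^\sharp(0,\tfrac12;s)$ both equal $+2\alpha^{-1}$, so they add to $4\alpha^{-1}$ rather than cancel, which after removing the overall factor of $2$ yields the residue $2\alpha^{-1}$ claimed for $Z_\alpha(0,\tfrac12;s)$.
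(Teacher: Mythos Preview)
Your proof is correct and follows essentially the same route as the paper: specialize Theorem~\ref{thm:main} at $r=0$, $q=\tfrac12$ (so $k=\ell=0$), use the identities $Z_\alpha^\sharp(0,\tfrac12;s)=2Z_\alpha(0,\tfrac12;s)$ and $\zeta^\sharp(0,\tfrac12;s)=(2^{s+1}-2)\zeta(s)$, and combine the residue $2\alpha^{-1}$ from the theorem with the residue $2\alpha^{-1}$ of $\alpha^{-1}(2^{s+1}-2)\zeta(s)$ at $s=1$. The paper records exactly this computation in the paragraph preceding the corollary; your only addition is making the residue bookkeeping explicit.
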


When $r\defeq 0$ and $q\defeq\tfrac12$
our proof of Theorem~\ref{thm:main} shows that the function
$$
F_\alpha^\sharp(0,\tfrac12;s)
=\pi^{-s/2}\Gamma(s/2)\big(2 Z_\alpha(0,\tfrac12;s)
-\alpha^{-1}(2^{s+1}-2)\zeta(s)+2^s\big)
$$
is analytic at $s=0$.  Since $\Gamma(s/2)$ has a pole at $s=0$,
one sees that $Z_\alpha(0,\tfrac12;s)$ takes the same value
at $s=0$ regardless of the choice of $\alpha$.

\begin{corollary}
\label{cor:one}
For every 
irrational $\alpha>1$ of finite type,
$Z_\alpha(0,\tfrac12;0)=-\tfrac12$.
\end{corollary}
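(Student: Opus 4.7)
The plan is to read off the value $Z_\alpha(0,\tfrac12;0)$ directly from the identity displayed just before the corollary. The author has asserted, as a by-product of the proof of Theorem~\ref{thm:main}, that
$$
F_\alpha^\sharp(0,\tfrac12;s)=\pi^{-s/2}\Gamma(s/2)\Big(2Z_\alpha(0,\tfrac12;s)-\alpha^{-1}(2^{s+1}-2)\zeta(s)+2^s\Big)
$$
is analytic at $s=0$. Since $\pi^{-s/2}$ is entire and nonzero at $s=0$, while $\Gamma(s/2)$ has a simple pole there (with residue $2$), the bracketed factor must vanish at $s=0$ in order to cancel that pole.

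First I would verify that the bracketed factor itself extends analytically to a neighborhood of $s=0$, so that ``vanishing at $s=0$'' is well defined: the preceding corollary continues $Z_\alpha(0,\tfrac12;s)$ analytically across the line $\sigma=0$ (its only singularity is the simple pole at $s=1$, which is strictly to the right), and $\zeta(s)$ is analytic at $s=0$. Next I would evaluate the factor at $s=0$. The identity $2^{s+1}-2=0$ at $s=0$ annihilates the entire $\alpha$-dependent middle term, using that $\zeta(0)=-\tfrac12$ is finite, while $2^s=1$ at $s=0$. Therefore the vanishing condition reads
$$
2Z_\alpha(0,\tfrac12;0)+1=0,
$$
which yields $Z_\alpha(0,\tfrac12;0)=-\tfrac12$, as claimed.

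The only substantive obstacle is the input itself, namely the analyticity of $F_\alpha^\sharp(0,\tfrac12;s)$ at $s=0$; this is what encodes the nontrivial cancellation between the Beatty-side sum and the completed Hurwitz-side zeta at the left edge of the critical strip, and it is supplied by the proof of Theorem~\ref{thm:main}. Conditional on that input, the proof reduces to the elementary computation above, and the striking $\alpha$-independence of the value appears automatically: the sole occurrence of $\alpha^{-1}$ in the formula is neutralized by the vanishing of $2^{s+1}-2$ at $s=0$, leaving behind only the universal constant $+1$ that fixes $Z_\alpha(0,\tfrac12;0)$.
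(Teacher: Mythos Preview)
Your argument is correct and is precisely the paper's own approach: the text immediately preceding the corollary asserts the analyticity of $F_\alpha^\sharp(0,\tfrac12;s)$ at $s=0$, observes that the simple pole of $\Gamma(s/2)$ forces the bracketed factor to vanish there, and leaves the elementary evaluation (in which $2^{s+1}-2$ kills the $\alpha^{-1}$ term and $2^s$ contributes~$1$) to the reader. You have simply made that evaluation explicit, and your additional check that the bracketed factor is itself analytic at $s=0$ (via the preceding corollary) is a welcome clarification.
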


Our second theorem is complementary to Theorem~\ref{thm:main};
it establishes that $Z_\alpha^\sharp(r,q;s)$ does not have a pole at $s=1$
whenever the real number $r$ is not contained in the lattice $\ZZ+\ZZ\alpha^{-1}$.

\begin{theorem}
\label{thm:main2}
Let $\alpha>1$ be an irrational number of finite type $\tau$.
Let $q\in(0,1)$, and suppose that
$r$ is a real number \underline{not} of the form $k\alpha^{-1}+\ell$
with $k,\ell\in\ZZ$. Then the function \eqref{eq:toTom}
continues analytically to the half-plane
$\big\{\sigma>1-1/(2\tau^2)\big\}$
with no singularities in that region.
\end{theorem}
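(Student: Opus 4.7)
The strategy is to mimic the derivation behind Theorem~\ref{thm:main} and then isolate where the absence of the lattice condition $r\in\ZZ+\ZZ\alpha^{-1}$ forces a narrower region of analyticity. First, I would express the indicator function of $\cB(\alpha)$ through the sawtooth identity
$$\mathbf{1}_{\cB(\alpha)}(n)=\alpha^{-1}+\psi(n\alpha^{-1})-\psi((n+1)\alpha^{-1}),\qquad \psi(t)\defeq\{t\}-\tfrac12,$$
so that $Z_\alpha(r,q;s)$ differs from $\alpha^{-1}\zeta(r,q;s)$ by a sum weighted by sawtooth values that telescopes in $n$. Substituting the Fourier expansion $\psi(t)=-\sum_{k\geq 1}\sin(2\pi kt)/(\pi k)$ and then performing the symmetrizing $\sharp$-combination would produce a formal identity of the shape
$$Z_\alpha^\sharp(r,q;s)-\alpha^{-1}\zeta^\sharp(r,q;s)=\sum_{k=1}^\infty\frac{\sin(\pi k\alpha^{-1})}{\pi k}\,T_k(r,q;s),$$
where each $T_k(r,q;s)$ is a fixed linear combination of Lipschitz--Lerch zeta functions evaluated at the four shifted arguments $\pm r\pm k\alpha^{-1}$ (with parameter $q$ or $1-q$). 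This is the same book-keeping that underlies Theorem~\ref{thm:main}.

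The crucial distinction exploited here is that the hypothesis $r\notin\ZZ+\ZZ\alpha^{-1}$ forces every one of the arguments $\pm r\pm k\alpha^{-1}$ to lie in $\RR\setminus\ZZ$ for every $k\geq 1$, so that each $T_k(r,q;s)$ is entire. Hence no individual term in the series can contribute a pole, and proving Theorem~\ref{thm:main2} reduces to checking that the series in $k$ defines an analytic function in the claimed half-plane. For that one would use the standard growth estimates obtained from Lerch's transformation formula, of the shape $|\zeta(z,q;s)|\ll_{s,q}\|z\|^{-\lambda(\sigma)}$ in vertical strips, with $\lambda(\sigma)\geq 0$ vanishing for $\sigma\geq 1$ and increasing as $\sigma$ decreases below $1$.

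The main obstacle is that, unlike in Theorem~\ref{thm:main}, no direct lower bound on the small denominators $\|r\pm k\alpha^{-1}\|$ is available from the Diophantine hypothesis on $\alpha$ alone, because $r$ is not assumed to have any rational relationship with $\alpha^{-1}$. The plan to circumvent this is to truncate the $k$-series at a parameter $K$ and to estimate the head via the equidistribution of $\{k\alpha^{-1}\}_{k\leq K}$ rather than any pointwise bound on individual terms; this equidistribution is quantified through the Erd\H{o}s--Tur\'an inequality, which converts the finite-type hypothesis on $\alpha$ into an explicit discrepancy bound for $\{k\alpha^{-1}\}_{k\leq K}$. The tail $k>K$ would be controlled by Abel summation, exploiting the convergence of the weights $\sin(\pi k\alpha^{-1})/(\pi k)$ against the bounded quantities $T_k$. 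Optimizing $K$ against $\sigma$ and $\tau$ then produces exactly the width $1-1/(2\tau^2)$ of the strip of analyticity, the two factors of $\tau$ in the exponent reflecting the fact that the Diophantine hypothesis enters twice --- once in the discrepancy bound and once in converting the pointwise $\|z\|^{-\lambda(\sigma)}$ estimate into an averaged, summable form.
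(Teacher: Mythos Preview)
Your overall architecture---expand the indicator of $\cB(\alpha)$ in a Fourier series, obtain a $k$-sum of shifted Lipschitz--Lerch functions, and observe that $r\notin\ZZ+\ZZ\alpha^{-1}$ makes each term entire---is sound and is, at a formal level, equivalent to what the paper does before it passes to the Mellin/theta representation. The paper, however, does \emph{not} work directly with the series $\sum_k k^{-1}T_k(s)$; it pushes everything through the integral $\int_0^1\Phi(u)u^{s/2-1}\,du$, applies the theta functional equation, and isolates for each $u$ a single ``worst'' term $\Upsilon_L^{(3)}(u)$ corresponding to the index $\kappa_u$ minimising $\|r-k\gamma\|$ over $|k|\le u^\theta L$. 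The entire weight of Theorem~\ref{thm:main2} then rests on the single estimate $|\kappa_u|\gg_L u^{2\theta^2}$, proved by a counting argument: if $\|r-\kappa_u\gamma\|$ is abnormally small, then discrepancy forces $\kappa_u$ to have entered the truncation window only recently, hence $|\kappa_u|$ is close to the cutoff and the coefficient $1/\kappa_u$ is small.

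Your proposal does not contain this mechanism, and the substitutes you describe do not work as stated. The tail estimate by Abel summation ``against the bounded quantities $T_k$'' fails because the $T_k$ are \emph{not} bounded: for $\sigma<1$ one has $|T_k|\asymp\|r-k\gamma\|^{-\lambda(\sigma)}$, and there is no Diophantine lower bound on $\|r-k\gamma\|$ whatsoever, so infinitely many tail terms can be arbitrarily large. For the head, invoking the discrepancy of $(\{k\gamma\})_{k\le K}$ only tells you how many $k$ fall in a short interval near $r$; it does not by itself bound $\sum_{k\le K}k^{-1}\|r-k\gamma\|^{-\lambda}$, because the function $\|x\|^{-\lambda}$ has unbounded variation and Koksma--Hlawka is inapplicable. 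What is actually needed---and what your sketch omits---is the observation that among $|k|\le K$ there is at most one index with $\|r-k\gamma\|\ll K^{-\tau}$ (by the type hypothesis on $\gamma$), together with a \emph{lower} bound on that exceptional index of the shape $|k|\gg K^{1/\tau}$, which is exactly where the second factor of $\tau$ enters and which the paper extracts from the discrepancy count. Without isolating and bounding this single dominant term, the optimisation in $K$ that you allude to cannot produce the exponent $1-1/(2\tau^2)$.
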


\section{Preliminaries}

\subsection{General notation}

Throughout the paper, we fix an irrational number $\alpha>1$
of finite type $\tau=\tau(\alpha)$ (see \S\ref{sec:disc-type}),
and we set $\gamma\defeq\alpha^{-1}$.
Note that $\gamma$ has the same type $\tau$.

As stated earlier, we write $\e(t)\defeq e^{2\pi it}$ for all $t\in\RR$.
We use $\fl{t}$ and $\{t\}$ to denote the greatest integer
not exceeding $t$ and the fractional part of $t$, respectively.
The notation $\distint{t}$ is used to represent the distance
from the real number~$t$ to the nearest integer; in other words,
\begin{equation}
\label{eq:strangethings}
\distint{t}\defeq\min_{n\in\ZZ}|t-n|=\min\big(\{t\},1-\{t\}\big)\qquad(t\in\RR).
\end{equation}
For every real number $t$, we denote by $\nearint{t}$ the integer that lies
\emph{closest} to $t$ if $t\not\in\tfrac12+\ZZ$, and we put
$\nearint{t}\defeq\fl{t}$ if $t\in\tfrac12+\ZZ$.  Then
\begin{equation}
\label{eq:distnear}
t=\begin{cases}
\nearint{t}+\distint{t}&\quad\hbox{if $\{t\}\le\tfrac12$},\\
\nearint{t}-\distint{t}&\quad\hbox{if $\{t\}>\tfrac12$}.
\end{cases}
\end{equation}

In what follows, any implied constants in the symbols $O$, $\ll$
and $\gg$ \emph{may depend on the parameters} $\alpha,r,q,\eps$
but are independent of other variables unless indicated otherwise.
For given functions $F$ and $G$,
the notations $F\ll G$, $G\gg F$ and
$F=O(G)$ are all equivalent to the statement that the inequality
$|F|\le c|G|$ holds with some constant $c>0$.

\subsection{Discrepancy and type}
\label{sec:disc-type}

The \emph{discrepancy} $D(M)$ of a sequence of (not
necessarily distinct) real numbers $(a_m)_{m=1}^M$ contained
in $[0,1)$ is given by
$$
D(M)\defeq\sup_{\cI\subseteq[0,1)}\bigg|\frac{V(\cI,M)}{M}-|\cI|\,\bigg|,
$$
where the supremum is taken over all intervals $\cI$ in
$[0,1)$, $V(\cI,M)$ is the number of positive integers
$m\le M$ such that $a_m\in\cI$, and $|\cI|$ is the length of $\cI$.

The \emph{type} $\tau=\tau(\gamma)$ of a given irrational number $\gamma$
is defined by
$$
\tau\defeq\sup\big\{t\in\RR:\liminf\limits_{n\to\infty}
~n^t\distint{\gamma n}=0\big\}.
$$
Using Dirichlet's approximation theorem, one sees that
$\tau\ge 1$ for every irrational number~$\gamma$. The
theorems of Khinchin \cite{Khin} and of Roth \cite{Roth1,Roth2}
assert that $\tau=1$ for almost all real numbers (in
the sense of the Lebesgue measure) and all irrational
algebraic numbers $\gamma$, respectively; see also \cite{Bug,Schm}.

Given an irrational number $\gamma$, the sequence
of fractional parts $(\{n\gamma\})_{n=1}^\infty$
is known to be uniformly distributed in $[0,1)$ (see 
\cite[Example~2.1, Chapter~1]{KuNi}).
In the case that
$\gamma$ is of finite type, the following more precise statement holds
(see \cite[Theorem~3.2, Chapter~2]{KuNi}).

\begin{lemma}
\label{lem:discr_with_type}  Let $\gamma$ be a fixed irrational
number of finite type $\tau$.  Then, for every $\delta\in\RR$ the
discrepancy $D_{\gamma,\delta}(M)$ of the sequence
$(\{\gamma m+\delta\})_{m=1}^M$ satisfies the bound
$$
D_{\gamma,\delta}(M)\ll M^{-1/(\tau+\eps)},
$$
where the implied constant depends only on $\gamma$ and $\eps$.
\end{lemma}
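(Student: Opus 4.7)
The natural route is via the Erd\H{o}s--Tur\'an inequality, which for any positive integer $H$ bounds
$$
D_{\gamma,\delta}(M)\ll\frac{1}{H}+\frac{1}{M}\sum_{h=1}^H\frac{1}{h}\left|\sum_{m=1}^M\e(h\gamma m+h\delta)\right|.
$$
The shift by $h\delta$ contributes only a unimodular factor, so it suffices to estimate the geometric sum $\sum_{m=1}^M\e(h\gamma m)$, whose modulus is dominated by $1/\distint{h\gamma}$. The problem thus reduces to bounding the weighted sum $\sum_{h=1}^H 1/(h\distint{h\gamma})$, after which one optimizes in $H$.

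The finite-type hypothesis immediately yields the pointwise estimate $\distint{h\gamma}\gg h^{-\tau-\eps}$, valid for all $h\ge 1$, but inserting this naively gives only $\sum_{h=1}^H 1/(h\distint{h\gamma})\ll H^{\tau+\eps}$, which after optimization delivers the weaker bound $D_{\gamma,\delta}(M)\ll M^{-1/(\tau+1+\eps)}$. This is too crude to recover the claim.

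The main obstacle, therefore, is to sharpen the estimate to $\sum_{h=1}^H 1/\distint{h\gamma}\ll H^{\tau+\eps}$ (saving a full factor of $H$ over the trivial pointwise bound), after which Abel summation yields $\sum_{h=1}^H 1/(h\distint{h\gamma})\ll H^{\tau-1+\eps}$. The key input is the continued-fraction expansion of $\gamma$: writing the convergents as $p_k/q_k$, the finite-type hypothesis forces $q_{k+1}\ll q_k^{\tau+\eps}$, and hence controls the partial quotients by $a_{k+1}\ll q_k^{\tau-1+\eps}$. The three-distance theorem then describes the gaps of $\{\distint{h\gamma}:1\le h\le q_n\}$ precisely, and a dyadic summation over blocks $[q_k,q_{k+1})$ with $q_k\le H$ produces the refined estimate. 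This is exactly where the extra work beyond the pointwise type bound lives, and is the crux of the argument.

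With the sharpened sum estimate in hand, the Erd\H{o}s--Tur\'an inequality reads $D_{\gamma,\delta}(M)\ll 1/H+H^{\tau-1+\eps}/M$. Choosing $H\defeq\fl{M^{1/(\tau+\eps)}}$ balances the two contributions and yields the stated bound $D_{\gamma,\delta}(M)\ll M^{-1/(\tau+\eps)}$, with the implied constant depending only on $\gamma$ and $\eps$ as required.
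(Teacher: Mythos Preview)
The paper does not prove this lemma; it simply quotes it as Theorem~3.2, Chapter~2 of Kuipers--Niederreiter. Your sketch is correct and is essentially the argument given in that reference: Erd\H{o}s--Tur\'an reduces the problem to bounding $\sum_{h\le H}1/(h\distint{h\gamma})$, the type hypothesis yields $\ll H^{\tau-1+\eps}$ for this sum, and the choice $H\asymp M^{1/(\tau+\eps)}$ balances the two terms.

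One remark on the ``crux'' step: the continued-fraction and three-distance machinery you invoke certainly works, but is heavier than necessary. A lighter argument uses only the type condition directly: if $h_1\ne h_2$ in $[1,H]$ both satisfy $\distint{h_i\gamma}<\delta$, then $\distint{(h_1-h_2)\gamma}\le\distint{h_1\gamma}+\distint{h_2\gamma}<2\delta$, and the type bound forces $|h_1-h_2|\gg\delta^{-1/(\tau+\eps)}$. Hence the set $\{h\le H:\distint{h\gamma}<\delta\}$ has cardinality $\ll H\delta^{1/(\tau+\eps)}+1$. Dyadically decomposing over $\delta\in[2^{-j-1},2^{-j})$ (noting that $2^{-j}\gg H^{-\tau-\eps}$ throughout) gives $\sum_{h\le H}1/\distint{h\gamma}\ll H^{\tau+\eps}$ with no appeal to convergents, and your Abel summation step then proceeds as written.
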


\subsection{Functional equations of theta functions}

\begin{lemma}
For any real numbers $v,w$  let
\begin{equation}
\label{eq:thetadefn}
\Theta_{v,w}(u)\defeq \e(\tfrac12vw)\sum_{n\in\ZZ}e^{-\pi(n+v)^2u}\e(wn)
\qquad(u>0).
\end{equation}
Then
\begin{equation}
\label{eq:fun3}
\Theta_{v,w}(u)=u^{-1/2}\Theta_{w,-v}(u^{-1}).
\end{equation}
\end{lemma}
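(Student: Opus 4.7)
The plan is to apply Poisson summation to the Schwartz function $f(x)\defeq e^{-\pi(x+v)^2 u}\e(wx)$, whose decay makes all subsequent manipulations valid. First I would compute the Fourier transform
$$
\hat f(\xi)\defeq\int_\RR f(x)e^{-2\pi i\xi x}\,dx
$$
by substituting $y=x+v$, which reduces the integral to the classical Gaussian Fourier transform $\int_\RR e^{-\pi y^2 u}e^{-2\pi i\eta y}\,dy=u^{-1/2}e^{-\pi\eta^2/u}$ with $\eta=\xi-w$. Collecting the linear phase produced by the shift $y=x+v$ gives
$$
\hat f(\xi)=u^{-1/2}\,e^{-2\pi i(w-\xi)v}\,e^{-\pi(\xi-w)^2/u}.
$$

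Next I would invoke Poisson summation $\sum_{n\in\ZZ}f(n)=\sum_{n\in\ZZ}\hat f(n)$ and then replace $n$ by $-n$ on the transform side (which is legitimate since the summation is over all of $\ZZ$). This yields
$$
\sum_{n\in\ZZ}f(n)=u^{-1/2}\,e^{-2\pi iwv}\sum_{n\in\ZZ}e^{-2\pi inv}\,e^{-\pi(n+w)^2/u}.
$$

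Finally I would recognize that, by the definition \eqref{eq:thetadefn} applied with the parameters $(w,-v)$ and argument $u^{-1}$, the inner sum equals $e^{\pi iwv}\,\Theta_{w,-v}(u^{-1})$. Multiplying both sides by the normalizing prefactor $\e(\tfrac12 vw)=e^{\pi ivw}$ (which converts $\sum_n f(n)$ into $\Theta_{v,w}(u)$ on the left) collapses the phases $e^{\pi ivw}\cdot e^{-2\pi iwv}\cdot e^{\pi iwv}=1$ on the right, and \eqref{eq:fun3} drops out.

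There is no real obstacle in this argument: the only analytic input is the classical Gaussian Fourier transform together with the validity of Poisson summation for Schwartz data, and everything else is bookkeeping of phase factors. In fact, the symmetric prefactor $\e(\tfrac12 vw)$ appearing in the definition \eqref{eq:thetadefn} is chosen precisely so that these phases cancel and the transformation law takes the clean form \eqref{eq:fun3}.
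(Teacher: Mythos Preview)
Your proof is correct and follows essentially the same route as the paper: apply Poisson summation to the Schwartz function $f(x)=e^{-\pi(x+v)^2u}\e(wx)$, compute $\hat f$, and read off \eqref{eq:fun3}. The only cosmetic differences are that the paper uses the convention $\hat f(x)=\int f(y)\e(xy)\,dy$ (so no $n\mapsto -n$ is needed afterward) and evaluates $\hat f$ by completing the square and shifting the contour via Cauchy's theorem, whereas you substitute $y=x+v$ and quote the classical Gaussian transform directly; the bookkeeping of phases is identical.
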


\begin{proof}
Let $u>0$ be fixed, and put
$$
f(x)\defeq e^{-\pi(x+v)^2u}\e(wx)\qquad(x\in\RR).
$$
The Fourier transform of $f$ is given by
$$
\hat f(x)\defeq\int_{-\infty}^\infty f(y)\e(xy) dy
=\int_{-\infty}^\infty e^{g(x,y)} dy,
$$
where
$$
g(x,y)\defeq -\pi(y+v)^2u+2\pi i(x+w)y.
$$
Since
$$
g(x,y)=-\pi(y+v-i(x+w)u^{-1})^2u
-2\pi iv(x+w)-\pi(x+w)^2u^{-1},
$$
it follows that
\begin{equation}
\label{eq:coffee}
\hat f(x)=e^{-\pi(x+w)^2u^{-1}}\e(-v(x+w))
\int_{-\infty}^\infty e^{-\pi(y+v-iw u^{-1}-ixu^{-1})^2u} dy.
\end{equation}
Using Cauchy's theorem to shift the line of integration vertically,
we see that the integral in \eqref{eq:coffee} is equal to
$$
\int_{-\infty}^\infty e^{-\pi(y+v)^2u} dy
=\int_{-\infty}^\infty e^{-\pi y^2u} dy
=u^{-1/2}\int_{-\infty}^\infty e^{-\pi y^2} dy
=u^{-1/2}.
$$
Consequently,
$$
\hat f(x)=u^{-1/2}e^{-\pi(x+w)^2u^{-1}}\e(-v(x+w))\qquad(x\in\RR).
$$
Applying the Poisson Summation Formula 
$$
\sum_{n\in\ZZ}f(n)=\sum_{n\in\ZZ}\hat f(n),
$$
we immediately deduce the functional equation \eqref{eq:fun3}.
\end{proof}

\subsection{The pulse wave}
Let $\ind{\alpha}$ denote the indicator function of $\cB(\alpha)$; that is,
\begin{equation}
\label{eq:indalphn-initial}
\ind{\alpha}(n)\defeq\begin{cases}
1&\quad\hbox{if $n=\fl{\alpha m}$ for some $m\in\NN$},\\
0&\quad\hbox{otherwise}.    
\end{cases}
\end{equation}
In this notation we have
$$
Z_\alpha(r,q;s)=\sum_{n=1}^\infty
\frac{\ind{\alpha}(n)\e(rn)}{(n+q)^s}.
$$

Let $\cX_\gamma$ be the periodic function defined by
\begin{equation}
\label{eq:Xdefn}
\cX_\gamma(t)\defeq\begin{cases}
1&\quad\hbox{if $\{t\}\in(0,\gamma)$},\\
\frac12&\quad\hbox{if $t\in\ZZ$ or $t\in\gamma+\ZZ$},\\
0&\quad\hbox{otherwise}.
\end{cases}
\end{equation}
Since $\cX_\gamma$ is periodic of bounded variation,
and $\cX_\gamma(t)=\tfrac12(\cX_\gamma(t^+)+\cX_\gamma(t^-))$
for all $t\in\RR$, its Fourier series converges everywhere, and we have
$$
\cX_\gamma(t)=\lim_{K\to\infty}
\sum_{|k|\le K}\widetilde \cX_\gamma(k)\e(kt),
$$
where the Fourier coefficients are given by
$$
\widetilde \cX_\gamma(0)\defeq\gamma\mand
\widetilde \cX_\gamma(k)\defeq\frac{1-\e(-k\gamma)}{2\pi ik}
\qquad(k\ne 0).
$$

For any irrational $\alpha>1$, it is easy to see that
a natural number $n$ lies in the Beatty
sequence $\cB(\alpha)$ if and only if $\{-n\gamma\}\in(0,\gamma)$.
Using this characterization, the indicator function $\ind{\alpha}$ given by
\eqref{eq:indalphn-initial} satisfies
\begin{equation}
\label{eq:indalphn-2}
\ind{\alpha}(n)=\cX_\gamma(-n\gamma)=\lim_{K\to\infty}
\sum_{|k|\le K}\widetilde \cX_\gamma(k)\e(-kn\gamma).
\end{equation}
From now on, we regard $\ind{\alpha}$ as a function on all of $\ZZ$
by defining the value $\ind{\alpha}(n)$ at an arbitrary integer $n$ via the
relation \eqref{eq:indalphn-2}.

Using our hypothesis that $\alpha$ is of finite type, the
relation \eqref{eq:indalphn-2} can be made more explicit; namely, 
for any positive real number $K$, we have the estimate
\begin{equation}
\label{eq:indalphn-precise}
\ind{\alpha}(n)=
\sum_{|k|\le K}\widetilde \cX_\gamma(k)\e(-kn\gamma)
+O\big(\max\{1,|n|^{\tau+\eps}\}K^{-1}\big)\qquad(n\in\ZZ)
\end{equation}
for any given $\eps>0$. Indeed, for each nonzero integer $n$ let
$$
S_K(n;u)\defeq\sum_{k\le u}(1-\e(-k\gamma))\e(-kn\gamma)\qquad(u>0).
$$
Using standard estimates for exponential sums
(see, e.g., Korobov~\cite{Kor}) we have
$$
S_K(n;u)\ll\distint{n\gamma}^{-1}+\distint{n\gamma+\gamma}^{-1}
$$
Since $\gamma$ is of type $\tau$, this implies that the bound
$$
S_K(n;u)\ll |n|^{\tau+\eps}
$$
holds, and therefore
$$
\sum_{k>K}\widetilde \cX_\gamma(k)\e(-kn\gamma)
=\int_K^\infty\frac{dS_K(n;u)}{2\pi iu}
\ll\frac{|n|^{\tau+\eps}}{K}.
$$
Bounding $\sum_{k>K}\widetilde \cX_\gamma(-k)\e(kn\gamma)$ in a similar manner,
we deduce \eqref{eq:indalphn-precise} in the case that $n\ne 0$.
When $n=0$, we have by \eqref{eq:indalphn-2}:
$$
\ind{\alpha}(0)=\gamma+\lim_{K\to\infty}
\sum_{0<|k|\le K}\frac{\e(k\gamma)}{2\pi ik}.
$$
Writing
$$
S_K(0;u)\defeq\sum_{k\le u}\e(k\gamma)\qquad(u>0),
$$
we have $S_K(0;u)\ll\distint{\gamma}^{-1}=\alpha\ll 1$,
and therefore
$$
\sum_{k>K}\widetilde \cX_\gamma(k)
=\int_K^\infty\frac{dS_K(0;u)}{2\pi iu}
\ll K^{-1}.
$$
This yields \eqref{eq:indalphn-precise} in the case that $n=0$.

\section{The proofs}

For all $u>0$ we denote
\begin{align*}
\Psi^+(r,q;u)&\defeq \sum_{n=0}^\infty
e^{-\pi(n+q)^2u}\e(rn),\\
\Psi_\alpha^+(r,q;u)&\defeq \sum_{n=0}^\infty
e^{-\pi(n+q)^2u}\ind{\alpha}(n)\e(rn).
\end{align*}
and we also put
\begin{align*}
\Psi(r,q;u)&\defeq \sum_{n\in\ZZ}
e^{-\pi(n+q)^2u}\e(rn),\\
\Psi_\alpha(r,q;u)&\defeq \sum_{n\in\ZZ}
e^{-\pi(n+q)^2u}\ind{\alpha}(n)\e(rn).
\end{align*}
It is easy to see that
\begin{equation}
\label{eq:PsiNZbounds}
\max\big\{
\big|\Psi(r,q;u)\big|,
\big|\Psi_\alpha(r,q;u)\big|\big\}\ll e^{-\pi\distintup{q}u}
\end{equation}
holds for $u>0$, and the relations
\begin{align}
\label{eq:PsiNZreln}
\Psi(r,q;u)&=\Psi^+(r,q;u)+\e(-r)\Psi^+(-r,1-q;u),\\
\label{eq:PsialphaNZreln}
\Psi_\alpha(r,q;u)&=\Psi_\alpha^+(r,q;u)+\e(-r)\Psi_\alpha^+(-r,1-q;u),
\end{align}
are immediate. Indeed, \eqref{eq:PsiNZreln} follows from
the fact that the polynomial $(n+\tfrac12)^2$ is invariant under
the map $n\mapsto -n-1$.  To prove \eqref{eq:PsialphaNZreln},
we note that \eqref{eq:indalphn-2} implies
$$
\ind{\alpha}(n)=\gamma+\lim_{K\to\infty}
\sum_{0<|k|\le K}\frac{\sin(\pi k\gamma)}{\pi k}
\cos(2\pi k\gamma(n+\tfrac12)),
$$
hence $\ind{\alpha}(n)=\ind{\alpha}(-n-1)$ for all $n\in\ZZ$.

Next, recall that
$$
\pi^{-s/2}\Gamma(s/2)\mu^{-s}
=\int_0^\infty e^{-\pi \mu^2u} u^{s/2-1}du
\qquad(\sigma>0)
$$
for every positive real number $\mu$.  Taking into account that
$\ind{\alpha}(0)=\tfrac12$ in view of
\eqref{eq:Xdefn} and \eqref{eq:indalphn-2},  
the function
$$
F_\alpha^+(r,q;s)\defeq
\pi^{-s/2}\Gamma(s/2)\big(Z_\alpha(r,q;s)
-\gamma\zeta(r,q;s)+\tfrac12q^{-s}\big)
$$
satisfies the relation
$$
F_\alpha^+(r,q;s)=\int_0^\infty\big(\Psi_\alpha^+(r,q;u)-\gamma\Psi^+(r,q;u)\big)
u^{s/2-1}du\qquad(\sigma>1).
$$
Therefore, using \eqref{eq:PsiNZreln} and \eqref{eq:PsialphaNZreln}
it follows that
\begin{align*}
F_\alpha^\sharp(r,q;s)
&\defeq e^{\pi ir}F_\alpha^+(r,q;s)+e^{-\pi ir}F_\alpha^+(-r,1-q;s)\\
&=\pi^{-s/2}\Gamma(s/2)\big(Z_\alpha^\sharp(r,q;s)
-\gamma\zeta^\sharp(r,q;s)+\tfrac12e^{\pi ir}q^{-s}
+\tfrac12e^{-\pi ir}(1-q)^{-s}\big)
\end{align*}
satisfies the relation
$$
F_\alpha^\sharp(r,q;s)
=e^{\pi ir}\int_0^\infty\big(\Psi_\alpha(r,q;u)-\gamma\Psi(r,q;u)\big)
u^{s/2-1}du\qquad(\sigma>1).
$$
To prove Theorems~\ref{thm:main} and \ref{thm:main2},
it suffices to show that $F_\alpha^\sharp(r,q;s)$
continues analytically in an appropriate manner according
to whether or not $r$ lies in the lattice $\ZZ+\ZZ\alpha^{-1}$.

To simplify the notation, we put
$$
F(s)\defeq F_\alpha^\sharp(r,q;s)\mand
\Phi(u)\defeq\Psi_\alpha(r,q;u)-\gamma\Psi(r,q;u),
$$
and write
$$
F(s)=e^{\pi ir}\big(F_0(s)+F_\infty(s)\big),
$$
where
$$
F_0(s)\defeq\int_0^1\Phi(u)u^{s/2-1}du
\mand
F_\infty(s)\defeq\int_1^\infty\Phi(u)u^{s/2-1}du.
$$
In view of \eqref{eq:PsiNZbounds} it is clear
that the integral $F_\infty(s)$
converges absolutely and uniformly on compact regions of $\CC$,
hence $F_\infty(s)$ continues to an \emph{entire}
function of $s\in\CC$.
Thus, the analytic continuation of $F(s)$ reduces to that of $F_0(s)$.

Let $K$ be a real-valued function such that $K(u)\ge 2$ for all $u>0$.
For the moment, let $u>0$ be fixed.
Using \eqref{eq:indalphn-precise} along with the bound
$$
\sum_{n\in\ZZ} e^{-\pi(n+q)^2u}\max\{1,|n|^{\tau+\eps}\}\ll 1,
$$
we have
\begin{align*}
\Psi_\alpha(r,q;u)
&=\sum_{n\in\ZZ} e^{-\pi(n+q)^2u}\e(rn)
\sum_{|k|\le K(u)}\widetilde \cX_\gamma(k)\e(-kn\gamma)+O(K(u)^{-1}).
\end{align*}
Reversing the order of summation and recalling \eqref{eq:thetadefn},
we see that
$$
\Psi_\alpha(r,q;u)
=\sum_{|k|\le K(u)}\widetilde \cX_\gamma(k)
\e(\tfrac12q(k\gamma-r))\Theta_{q,r-k\gamma}(u)
+O(K(u)^{-1}).
$$
Since $\widetilde\cX_\gamma(0)=\gamma$, and 
$\Psi(r,q;u)=\e(-\tfrac12qr)\Theta_{q,r}(u)$ by \eqref{eq:thetadefn},
we derive the estimate
$$
\Phi(u)=\Phi_K(u)+O(K(u)^{-1}),
$$
where
\begin{equation}
\label{eq:redsolocup}
\Phi_K(u)\defeq\sum_{0<|k|\le K(u)}\widetilde \cX_\gamma(k)
\e(\tfrac12q(k\gamma-r))\Theta_{q,r-k\gamma}(u).
\end{equation}
In particular,
\begin{equation}
\label{eq:F0s-expr}
F_0(s)=\int_1^\infty\Phi_K(u^{-1})u^{-s/2-1}du
+O\bigg(\int_0^1K(u)^{-1}u^{\sigma/2-1}du\bigg).
\end{equation}
By the functional equation \eqref{eq:fun3}
and the definition \eqref{eq:thetadefn},
$$
\Theta_{q,r-k\gamma}(u)
=u^{-1/2}\Theta_{r-k\gamma,-q}(u^{-1})
=u^{-1/2}\e(\tfrac12q(k\gamma-r))
\sum_{n\in\ZZ}e^{-\pi(n+r-k\gamma)^2u^{-1}}\e(-qn).
$$
Combining this expression with \eqref{eq:redsolocup} we have
\begin{equation}
\label{eq:five}
\Phi_K(u^{-1})=u^{1/2}\sum_{0<|k|\le K(u^{-1})}\widetilde \cX_\gamma(k)
\e(q(k\gamma-r))\sum_{n\in\ZZ}e^{-\pi(n+r-k\gamma)^2u}\e(-qn).
\end{equation}

Now, we make the specific choice
$$
K(u)=K_L(u)\defeq \max\{2,u^{-\theta} L\}\qquad\text{with}\quad
\theta\defeq \frac{1-\eps}{2(\tau+\eps)},
$$
where $L$ is a large positive real number,
and $\eps>0$ is fixed (and small).  In particular, for all large $u$ we have
$$
\Phi_{K_L}(u^{-1})=u^{1/2}\sum_{0<|k|\le u^\theta L}\widetilde \cX_\gamma(k)
\e(q(k\gamma-r))\sum_{n\in\ZZ}e^{-\pi(n+r-k\gamma)^2u}\e(-qn).
$$
Note that \eqref{eq:F0s-expr} takes the form
\begin{equation}
\label{eq:F0s-expr2}
F_0(s)=\int_1^\infty\Phi_{K_L}(u^{-1})u^{-s/2-1}du
+O\big(L^{-1}(\sigma+2\theta)^{-1}\big)
\end{equation}
provided that $\sigma>-2\theta$.

To proceed further, for each integer $k$ we write
$$
r-k\gamma=\nearint{r-k\gamma}+\nu_k\distint{r-k\gamma}
$$
with $\nu_k\in\{\pm 1\}$ as in \eqref{eq:distnear}.  Making
the change of variables $n\mapsto n-\nearint{r-k\gamma}$ in the
inner summation of \eqref{eq:five}, it follows that
$$
\Phi_{K_L}(u^{-1})=u^{1/2}\sum_{0<|k|\le u^\theta L}\widetilde \cX_\gamma(k)
\e(-q\nu_k \distint{r-k\gamma})\sum_{n\in\ZZ}
e^{-\pi(n+\nu_k\distintup{r-k\gamma})^2u}\e(-qn).
$$
We introduce the notation
\begin{equation}
\label{eq:deltau-defn}
\delta_u\defeq\min\big\{\distintup{r-k\gamma}:|k|\le u^\theta L\big\},
\end{equation}
and let $\kappa_u$ denote an integer for which
$$
|\kappa_u|\le u^\theta L\mand
\distintup{r-\kappa_u\gamma}=\delta_u.
$$
Then we have
$$
\Phi_{K_L}(u^{-1})=u^{1/2}\big(\Upsilon_L^{(1)}(u)+\Upsilon_L^{(2)}(u)
+\Upsilon_L^{(3)}(u)\big),
$$
where
\begin{align*}
\Upsilon_L^{(1)}(u)
&\defeq\sum_{0<|k|\le u^\theta L}\widetilde \cX_\gamma(k)
\e(-q\nu_k \distint{r-k\gamma})\sum_{\substack{n\in\ZZ\\n\ne 0}}
e^{-\pi(n+\nu_k\distintup{r-k\gamma})^2u}\e(-qn),\\
\Upsilon_L^{(2)}(u)
&\defeq\sum_{\substack{0<|k|\le u^\theta L\\k\ne \kappa_u}}
\widetilde \cX_\gamma(k)\e(-q\nu_k \distint{r-k\gamma})
e^{-\pi\distintup{r-k\gamma}^2u},\\
\Upsilon_L^{(3)}(u)
&\defeq\widetilde \cX_\gamma(\kappa_u)\e(-q\nu_{\kappa_u}\delta_u)
e^{-\pi\delta_u^2u}.
\end{align*}
By \eqref{eq:F0s-expr2} it follows that the estimate
$$
F_0(s)=G_L^{(1)}(s)+G_L^{(2)}(s)+G_L^{(3)}(s)
+O\big(L^{-1}(\sigma+2\theta)^{-1}\big)
$$
holds provided that $\sigma>-2\theta$, where
$$
G_L^{(j)}(s)\defeq\int_1^\infty\Upsilon_L^{(j)}(u)u^{-s/2-1/2}du
\qquad(j=1,2,3).
$$
Thus, the analytic continuation of $F_0(s)$ rests on the analytic
properties of the integrals $G_L^{(j)}(s)$.

Since $\widetilde \cX_\gamma(k)\ll|k|^{-1}$
and $\distintup{r-k\gamma}\le\tfrac 12$ for every
$k\ne 0$, the bound
$$
\Upsilon_L^{(1)}(u)\ll u^{1/2}e^{-\pi u/4}\log (\max\{2,u^\theta L\})
$$
is obvious.  For fixed $L$, this implies that the integral
$G_L^{(1)}(s)$ converges absolutely for all $s\in\CC$, uniformly on
compact regions, and hence $G_L^{(1)}(s)$ is an entire function of $s$.

To determine the analytic behavior of $G_L^{(2)}(s)$, we consider
two distinct cases according to the size of $\delta_u$.

\textsc{Case 1:} $\delta_u\ge u^{-1/2}\log u$.  By the definition
of $\delta_u$ (see \eqref{eq:deltau-defn}) it follows that
$\distint{r-k\gamma}\ge u^{-1/2}\log u$
for all $k$ such that $|k|\le u^\theta L$. Since
$\widetilde \cX_\gamma(k)\ll|k|^{-1}$ for every $k\ne 0$, we have
\begin{equation}
\label{eq:case1est}
\big|\Upsilon_L^{(2)}(u)\big|
\ll\sum_{0<|k|\le u^\theta L}|k|^{-1}
e^{-\pi\distintup{r-k\gamma}^2u}
\ll\log(u^\theta L)u^{-\pi\log u}.
\end{equation}
Note that the bound
\begin{equation}
\label{eq:case1est(3)}
\big|\Upsilon_L^{(3)}(u)\big|\ll u^{-\pi\log u}
\end{equation}
also holds in this case.

\textsc{Case 2:} $\delta_u\le u^{-1/2}\log u$.  Let $k$ be
such that $0<|k|\le u^\theta L$ and $k\ne\kappa_u$. Then
\begin{align*}
r-k\gamma&=\nearint{r-k\gamma}+\nu_k\distint{r-k\gamma},\\
r-\kappa_u\gamma&=\nearint{r-\kappa_u\gamma}+\nu_{\kappa_u}\delta_u,\\
\kappa_u\gamma-k\gamma&=\nearint{\kappa_u\gamma-k\gamma}
+\nu\distint{\kappa_u\gamma-k\gamma},
\end{align*}
where $\nu\in\{\pm 1\}$ as in \eqref{eq:distnear}, and therefore
\begin{equation}
\label{eq:smaug}
\distint{r-k\gamma}\equiv \nu_k\nu_{\kappa_u}\delta_u
+\nu_k\nu\distint{\kappa_u\gamma-k\gamma}\pmod 1
\end{equation}
Noting that $|\kappa_u-k|\le 2u^\theta L$, and using the fact that $\gamma$
is of type $\tau$, we also have
\begin{equation}
\label{eq:smaug2}
\distint{\kappa_u\gamma-k\gamma}\gg|k-\kappa_u|^{-\tau-\eps}
\gg(u^\theta L)^{-\tau-\eps}=L^{-\tau-\eps}u^{-1/2+\eps/2}.
\end{equation}
As $\delta_u=o(u^{-1/2+\eps/2})$ as $u\to\infty$, from
\eqref{eq:smaug} and \eqref{eq:smaug2} we derive the lower bound
$$
\distint{r-k\gamma}\gg L^{-\tau-\eps}u^{-1/2+\eps/2}
\qquad(0<|k|\le u^\theta L,~k\ne\kappa_u).
$$
Arguing as in Case~1, this implies that
\begin{equation}
\label{eq:case2est}
\big|\Upsilon_L^{(2)}(u)\big|
\ll\log(u^\theta L)\exp(-\pi L^{-2\tau-2\eps} u^\eps).
\end{equation}

For fixed $L$, the bounds \eqref{eq:case1est} and
\eqref{eq:case2est} together imply that the integral
$G_L^{(2)}(s)$ converges absolutely for all $s\in\CC$, uniformly on
compact regions, and therefore $G_L^{(2)}(s)$ is an entire function of $s$.

Turning now 
to the analytic behavior of $G_L^{(3)}(s)$, we consider two distinct cases
according to whether or not $\delta_u$ vanishes on the interval $(1,\infty)$.

First, suppose that $\delta_u=0$ for some $u>1$.  In view of the definition
\eqref{eq:deltau-defn},
this condition is equivalent to the statement that $r=k\gamma+\ell$
for some (uniquely determined)
integers $k$ and~$\ell$.  In this case, one has $\delta_u=0$ and $\kappa_u=k$ for
all sufficiently large $u$.  In particular, for some sufficiently large
real number $U_L$, one sees that
$\Upsilon_L^{(3)}(u)=\widetilde \cX_\gamma(k)$ once $u\ge U_L$.
Consequently, if we denote
$$
G_L^{(4)}(s)\defeq G_L^{(3)}(s)-\frac{2\widetilde \cX_\gamma(k)}{s-1}
=\int_1^{U_L}\big(\Upsilon_L^{(3)}(u)-\widetilde \cX_\gamma(k)\big)u^{-s/2-1/2}du,
$$
then clearly $G_L^{(4)}(s)$ converges absolutely for all $s\in\CC$,
uniformly on compact regions, and so $G_L^{(4)}(s)$ is an entire function of $s$.
Putting everything together, we have therefore shown that
$$
F_\alpha^\natural(r,q;s)
\defeq F_\alpha^\sharp(r,q;s)-\frac{2e^{\pi ir}\widetilde \cX_\gamma(k)}{s-1}
=H_L(s)+O\big(L^{-1}(\sigma+2\theta)^{-1}\big),
$$
where
$$
H_L(s)\defeq e^{\pi ir}\big(F_\infty(s)+G_L^{(1)}(s)
+G_L^{(2)}(s)+G_L^{(4)}(s)\big)
$$
is an entire function of $s\in\CC$ for any fixed $L$.
The sequence $(H_L(s))_{L\ge 1}$ converges uniformly to
$F_\alpha^\natural(r,q;s)$ on every compact subset
of the half-plane $\{\sigma>-2\theta\}$, hence $F_\alpha^\natural(r,q;s)$
is analytic in the same region.  Since $\theta\to 1/(2\tau)$ as $\eps\to 0^+$, Theorem~\ref{thm:main} follows.

Next, we suppose that $\delta_u\ne 0$ for all $u>1$.
Observe that the map $u\to\delta_u$ is a positive nonincreasing 
step function which tends to zero as $u\to\infty$.  Let
$u_1<u_2<\cdots$ be the ordered sequence of real numbers $u_j>1$
that have one or both of the following properties:
\begin{itemize}
\item[$(i)$] $\delta_u>\delta_{u_j}$ for all $u\in(1,u_j)$,
\item[$(ii)$] $\delta_{u_j}=u_j^{-1/2}\log u_j$.
\end{itemize}
Put $u_0\defeq 1$.  Note that the sequence $(u_j)_{j\ge 0}$ is
countable.   For each $j\ge 0$, let $\cI_j$ denote
the open interval $(u_j,u_{j+1})$.
To prove Theorem~\ref{thm:main2}, it suffices to establish the
upper bound
\begin{equation}
\label{eq:Up(3)upperbd}
\big|\Upsilon_L^{(3)}(u)\big|\ll_L u^{-2\theta^2}\qquad(u\in\cI_j)
\end{equation}
for every $j\ge 0$, where the implied constant in \eqref{eq:Up(3)upperbd}
may depend on $L$ but is independent of the index $j$.  Indeed, since
$$
G_L^{(3)}(s)=
\int_1^\infty\Upsilon_L^{(3)}(u)u^{-s/2-1/2}du
=\sum_{j\ge 0}\int_{\cI_j}\Upsilon_L^{(3)}(u)u^{-s/2-1/2}du,
$$
the bound \eqref{eq:Up(3)upperbd} implies that the integral
$G_L^{(3)}(s)$ converges absolutely throughout the
half-plane $\{\sigma>1-2\theta^2\}$, uniformly on compact regions, 
and thus $G_L^{(3)}(s)$ is analytic in that region.
Then
$$
F_\alpha^\sharp(r,q;s)
=H_L(s)+O\big(L^{-1}(\sigma+2\theta)^{-1}\big),
$$
where
$$
H_L(s)\defeq e^{\pi ir}\big(F_\infty(s)+G_L^{(1)}(s)
+G_L^{(2)}(s)+G_L^{(3)}(s)\big)
$$
is analytic in the half-plane $\{\sigma>1-2\theta^2\}$.
The sequence $(H_L(s))_{L\ge 1}$ converges uniformly to
$F_\alpha^\sharp(r,q;s)$ on every compact subset
of $\{\sigma>1-2\theta^2\}$, hence $F_\alpha^\sharp(r,q;s)$
is analytic in that half-plane.
Since $\theta\to 1/(2\tau)$ as $\eps\to 0^+$, Theorem~\ref{thm:main2} follows.

It remains to establish \eqref{eq:Up(3)upperbd}.  To this end, put
\begin{align*}
\Omega^+&\defeq\{j\ge 0:\delta_u>u^{-1/2}\log u\text{~for all~}u\in\cI_j\},\\
\Omega^-&\defeq\{j\ge 0:\delta_u<u^{-1/2}\log u\text{~for all~}u\in\cI_j\}.
\end{align*}
By the manner in which the sequence $(u_j)_{j\ge 0}$ is constructed
(especially, see $(ii)$ above), it follows that
every nonnegative integer $j$ lies either in $\Omega^+$ or in $\Omega^-$. 
Moreover, \eqref{eq:case1est(3)} immediately yields \eqref{eq:Up(3)upperbd}
in the case that $j\in\Omega^+$.  Therefore, it remains to show that
\eqref{eq:Up(3)upperbd} holds for integers $j\in\Omega^-$.

Let $j\in\Omega^-$ be fixed.  For all $u\in\cI_j$ we have
$\delta_u<u^{-1/2}\log u$ (by $(ii)$ above) and
$\kappa_u\ne 0$ (since $\delta_u\ne 0$).
Using the estimates
$$
\widetilde \cX_\gamma(\kappa_u)
=\frac{1-\e(-\kappa_u\gamma)}{2\pi i\kappa_u}
=\frac{1-\e(-r)}{2\pi i\kappa_u}\big(1+O(u^{-1/2}\log u)\big)
$$
and
$$
\e(-q\nu_{\kappa_u}\delta_u)=1+O(u^{-1/2}\log u),
$$
we derive that
$$
\Upsilon_L^{(3)}(u)
=\frac{1-\e(-r)}{2\pi i\kappa_u}
e^{-\pi\delta_u^2u}\big(1+O(u^{-1/2}\log u)\big)
\ll \kappa_u^{-1}.
$$
Consequently, to prove \eqref{eq:Up(3)upperbd} it is enough to establish
the lower bound
\begin{equation}
\label{eq:kappalower}
\kappa_u\gg_L u^{2\theta^2}\qquad(u\in\cI_j).
\end{equation}
Let $\Delta_j\defeq\delta_{u_j}$.
Using $(i)$ and $(ii)$ above, and taking into account that
$\delta_u$ is a right-continuous function of $u$ by \eqref{eq:deltau-defn},
we see that $\delta_u=\Delta_j$ and $\delta_u<u^{-1/2}\log u$ for all $u\in\cI_j$.
Put $k_j\defeq\kappa_{u_j}$, so that $\distint{r-k_j\gamma}=\Delta_j$,
and note that $|k_j|\le u_j^\theta L$.   On the other hand, for any integer $k$
with $|k|<u_j^\theta L$ write $|k|=u^\theta L$ with some real number $u<u_j$;
then
$$
\distint{r-k\gamma}\ge\delta_u>\Delta_j.
$$
This argument shows that
\begin{equation}
\label{eq:kjequals}
|k_j|=u_j^\theta L.
\end{equation}

Since $j\in\Omega^-$, the argument given in Case 2 implies that
there is \emph{precisely one} integer $k$ that satisfies both inequalities
\begin{equation}
\label{eq:bogus}
|k|<u_{j+1}^\theta L\mand\distintup{r-k\gamma}\le u_j^{-1/2}\log u_j
\end{equation}
(namely, the integer $k=\kappa_j$).  On the other hand,
using \eqref{eq:strangethings} in combination with
the definition of discrepancy and
Lemma~\ref{lem:discr_with_type}, one sees that the number of integers $k$
satisfying \eqref{eq:bogus} is
$$
2u_j^{-1/2}\log u_j\cdot u_{j+1}^\theta L
+O\((u_{j+1}^\theta L)^{1-1/(\tau+\eps)}\).
$$
For large $j$, this leads to a contradiction unless both bounds
$$
2u_j^{-1/2}\log u_j\cdot u_{j+1}^\theta L\ll 1
$$
and
$$
2u_j^{-1/2}\log u_j\cdot u_{j+1}^\theta L\ll
(u_{j+1}^\theta L)^{1-1/(\tau+\eps)}
$$
satisfied.  We deduce that
\begin{equation}
\label{eq:ujcompare}
u_{j+1}^{2\theta} \ll_L u_j.
\end{equation}
Now let $u\in\cI_j$.  Since
$$
|\kappa_u|\le u^\theta L\mand
\distint{r-\kappa_u\gamma}=\delta_u=\Delta_j,
$$
the integer $k=\kappa_u$ satisfies both inequalities in \eqref{eq:bogus}.
Consequently, $\kappa_u=k_j$, and using \eqref{eq:kjequals}
and \eqref{eq:ujcompare} we have
$$
|\kappa_u|=|k_j|=u_j^\theta L\gg_L u_{j+1}^{2\theta^2}>u^{2\theta^2}.
$$
This is the required bound \eqref{eq:kappalower}, and our proof of
Theorem~\ref{thm:main2} is complete.

\end{document}